\newtheorem{theorem}{Theorem}
\newtheorem{lemma}{Lemma}
\newtheorem{proposition}{Proposition}
\newtheorem{corollary}{Corollary}
\newtheorem{question}{Question}
\begin{document}

\title{On the maximum rank of a real binary form}
\author{A.~Causa  \and R.~Re}
\date{}
\maketitle
\begin{abstract} We show that a real binary form $f$ of degree
$n\geq 3$ has $n$ distinct real roots if and only if for any
$(\alpha,\beta)\in\mathbb{R}^2\setminus\{0\}$ all the forms
$\alpha f_x+\beta f_y$ have $n-1$ distinct real roots. This
answers to a question of P. Comon and G. Ottaviani in \cite{coot},
and allows to complete their argument to show that $f$ has
symmetric rank $n$ if and only if it has $n$ distinct real roots.
\end{abstract}
\section{Introduction}
This paper deals with the following problem: Given a degree $n$ polynomial $f\in\mathbb
K[x_1,\ldots,x_m]$ find the rank (or Waring rank) of $f$ i.e.~the minimum number of summands
which achieve the following decomposition:\[f=\lambda_1l_1^n+\cdots
+\lambda_rl_r^n\quad\mbox{with }\lambda_i\in\mathbb K\mbox{ and }l_i\mbox{ linear forms.}\]
 For $\mathbb K=\mathbb C$ and $f$ generic the answer has been given
 (see \cite{alhi,otbr}), nevertheless some questions remain unsolved, e.g.~it's not yet
 known which is the stratification of the set of complex polynomials by the rank. However one
 can see \cite{cose} for an answer in the binary case.\\
 In the real case, i.e.~$\mathbb K=\mathbb R$, the situation becomes more complicated. In
 contrast to the complex case which has a generic rank, in the real case the generic rank is
 substituted by the concept of typical rank.
A rank $k$ is said \emph{typical} for a given degree $n$ if there exists a euclidean open set in
the space of real degree $n$ polynomials such that any $f$ in such open et has rank $k$.
 We will prove the following theorem, posed as a question in \cite{coot}.
\begin{theorem}\label{thm:main} Let $f(x,y)$ be a real homogeneous polynomial of degree $n\geq 3$ without multiple roots in $\mathbb{C}$.
 Then $f$ has all real roots if and only if for any $(0,0)\not=(\alpha,\beta)\in\mathbb{R}^2$ the polynomial $\alpha f_x+\beta f_y$ has
 $n-1$ distinct real roots.\end{theorem}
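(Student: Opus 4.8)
The two implications are of quite different character, so I would treat them separately.

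For the forward implication (all roots real $\Rightarrow$ every $\alpha f_x+\beta f_y$ has $n-1$ distinct real roots), the plan is to reduce an arbitrary directional derivative to an ordinary partial derivative and then invoke Rolle's theorem. Given $(\alpha,\beta)\ne(0,0)$, I would choose $(\gamma,\delta)$ with $\alpha\delta-\beta\gamma\ne0$ and substitute $x=\alpha u+\gamma v,\ y=\beta u+\delta v$; by the chain rule $\partial_u=\alpha\partial_x+\beta\partial_y$, so $\alpha f_x+\beta f_y$ becomes $\tilde f_u$ where $\tilde f(u,v)=f(\alpha u+\gamma v,\beta u+\delta v)$. A linear substitution is a real automorphism of $\mathbb P^1$ and preserves the number of distinct real roots, so $\tilde f$ again has $n$ distinct real roots and it suffices to handle $f_x$. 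Dehomogenizing by $F(t)=f(t,1)$ one has $f_x(t,1)=F'(t)$, and Rolle's theorem produces $\deg F-1$ distinct real roots interlacing those of $F$. The only delicate point is the behaviour at infinity: if $[1:0]$ is not a root of $f$ then $\deg F=n$ and Rolle already yields all $n-1$ roots, while if $[1:0]$ is a (necessarily simple) root then $\deg F=n-1$, $y\mid f_x$, and the one missing root is the simple root at infinity.

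The reverse implication is the substantial one, and I would encode the pencil $\{\alpha f_x+\beta f_y\}$ through the polar map $\Phi=[f_x:f_y]\colon\mathbb P^1\to\mathbb P^1$. Since $f$ has no multiple root, $f_x$ and $f_y$ have no common zero (a common zero would, by Euler's identity $xf_x+yf_y=nf$, also be a zero of $f$), so $\Phi$ is a morphism of degree $n-1$. The roots of $\alpha f_x+\beta f_y$ are exactly the fibre $\Phi^{-1}([-\beta:\alpha])$, so the hypothesis says precisely that every fibre over a real point consists of $n-1$ distinct real points, i.e.\ $\Phi^{-1}(\mathbb P^1(\mathbb R))=\mathbb P^1(\mathbb R)$. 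Again by Euler's identity, $[x_0:y_0]$ is a root of $f$ iff $\Phi([x_0:y_0])=[-y_0:x_0]$; writing $\sigma[x:y]=[-y:x]$, an involution preserving the upper half-plane, the roots of $f$ are exactly the fixed points of $B:=\sigma\circ\Phi$, a map of degree $n-1$. Dehomogenizing, using $f_y(t,1)=nF(t)-tF'(t)$, I expect the clean formula
\[
B(t)=t-n\,\frac{F(t)}{F'(t)},
\]
an over-relaxed Newton map, whose multiplier at every simple root of $F$ is $B'=1-n$.

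The heart of the argument is then a Schwarz-lemma contradiction, and this is the step I expect to be the crux. Because $\sigma$ preserves $\mathbb P^1(\mathbb R)$, the hypothesis gives $B^{-1}(\mathbb P^1(\mathbb R))=\mathbb P^1(\mathbb R)$, so $B$ carries the upper half-plane $H$ (being connected and mapped off the real line) into one of the two half-planes. If $f$ had a non-real root, I could take it to be some $\zeta\in H$; then $B(\zeta)=\zeta\in H$ forces $B(H)\subseteq H$, so $B$ is a holomorphic self-map of $H$ with an interior fixed point, and $\deg B=n-1\ge2$ means it is not an automorphism. The Schwarz--Pick lemma would then force $|B'(\zeta)|<1$, contradicting $B'(\zeta)=1-n$ with $|1-n|=n-1\ge2$. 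Hence $f$ has no non-real root. The role of the hypothesis is exactly to promote $B$ to a self-map of a half-plane so that Schwarz--Pick applies, and the assumption $n\ge3$ enters decisively through $|1-n|>1$; the main work lies in isolating the polar map, recognizing the fixed-point reformulation, and computing that the multiplier is the constant $1-n$.
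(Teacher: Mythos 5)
Your proof is correct, and in the hard (``if'') direction it takes a genuinely different route from the paper's. The paper works entirely over the reals with degree theory for circle maps: it normalizes the gradient to $\bar{\phi}=|\nabla f|^{-1}(f_x,f_y):S^1\to S^1$, computes that its angular derivative equals $(n-1)^{-1}|\nabla f|^{-2}H(f)$ (so your ``no real ramification'' condition makes it nowhere vanishing), pins down the sign as negative via an induction on repeated $x$-derivatives of $f$ terminating in a quadratic whose Hessian is minus a discriminant, concludes $\deg\bar{\phi}=-(n-1)$, and then uses the twisted map $\bar{\psi}=e^{-i\theta}\bar{\phi}$, of degree $\deg\bar{\phi}-1=-n$, to produce at least $n$ real roots of $f$ as preimages of $(0,\pm 1)$. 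You instead complexify: the hypothesis becomes $\Phi^{-1}(\mathbb{P}^1(\mathbb{R}))=\mathbb{P}^1(\mathbb{R})$ for the degree-$(n-1)$ polar map $\Phi=[f_x:f_y]$, the roots of $f$ become the fixed points of $B=\sigma\circ\Phi$ (your relaxed Newton map $t\mapsto t-nF(t)/F'(t)$), the multiplier at each necessarily simple fixed point is $1-n$, and Schwarz--Pick forbids a holomorphic self-map of the half-plane from having an interior fixed point with multiplier of modulus $n-1\geq 2$; in fact you do not even need your remark that $B$ fails to be an automorphism, since Schwarz--Pick gives $|B'(\zeta)|\leq 1$ at an interior fixed point unconditionally. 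The two proofs are structurally parallel --- your fixed-point equation $\Phi(p)=\sigma(p)$ is exactly the paper's observation, via Euler's identity, that roots of $f$ are the points where $e^{-i\theta}\bar{\phi}$ is purely imaginary --- but they cash this in differently: the paper's real-topological route is elementary and computes the exact winding number, hence directly exhibits all $n$ real roots, at the price of the delicate sign determination (the ``Claim'' in their proof); your complex-analytic route bypasses the Hessian sign computation and the winding-number machinery altogether, is shorter, and makes completely transparent where $n\geq 3$ is used. The easy (``only if'') direction is the same Rolle-plus-linear-change-of-coordinates argument in both, yours being a bit more careful about the root at infinity.
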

Notice that the ``only if" part of the theorem is easy. Indeed given
any $(\alpha,\beta)\not=(0,0)$ one may consider a new coordinate
system $l,$ $m$  on the projective line, such that $x=\alpha
l+\alpha' m$, and $y=\beta l+\beta' m$, so that
$\partial_l=\alpha\partial_x+\beta\partial_y$. Writing $f$ as a
function of $l,m$ and de-homogenizing by setting $m=1$ one sees
that $f_l$ has $n-1$ distinct roots by the theorem of Rolle.

In \cite{coot} the result above has been considered in connection
with the problem of determining the \emph{rank} of a real binary
form, that is the minimum number $r$ such that $f(x,y)=\lambda_1
l_1^n+\cdots+\lambda_r l_r^n$, with $\lambda_i\in\mathbb{R}$ and
$l_i=\alpha_ix+\beta_i y\in \mathbb{R}[x,y]$ for $i=1,\ldots r$.
Using the arguments already given in \cite{coot} and applying
Theorem \ref{thm:main}, one gets the following result.
\begin{corollary}\label{cor:rank} A real binary form $f(x,y)$ of degree $n\geq 3$ without multiple roots in $\mathbb{C}$ has rank $n$ if and only
if it has $n$ distinct real roots.\end{corollary}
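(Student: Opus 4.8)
The plan is to connect the real rank of $f$ to the behaviour of its directional derivatives $D_{\alpha,\beta}f:=\alpha f_x+\beta f_y$, closing each of the two implications with Theorem \ref{thm:main}. Throughout I would use the standard fact (already present in \cite{coot}) that a real binary $n$-form has real rank at most $n$, together with the following elementary \emph{derivative-lowering} observation: if $f=\sum_{i=1}^{r}\lambda_i l_i^{\,n}$ with $l_i=a_ix+b_iy$, then $D_{\alpha,\beta}f=n\sum_{i=1}^{r}\lambda_i(\alpha a_i+\beta b_i)\,l_i^{\,n-1}$. Hence every directional derivative is a combination of the \emph{same} powers $l_i^{\,n-1}$, so $\mathrm{rank}_{\mathbb R}(D_{\alpha,\beta}f)\le \mathrm{rank}_{\mathbb R}(f)$; moreover the choice $(\alpha,\beta)=(b_1,-a_1)\neq(0,0)$ annihilates the first summand, so from a minimal decomposition one produces a directional derivative of rank at most $r-1$.

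First I would establish the implication ``$f$ has $n$ distinct real roots $\Rightarrow \mathrm{rank}_{\mathbb R}(f)=n$'' in the sharper form of a lower bound: every real binary $m$-form $g$ with $m$ distinct real roots has $\mathrm{rank}_{\mathbb R}(g)\ge m$, proved by induction on $m$. The cases $m\le 2$ are immediate, since a degree-$2$ form with distinct roots is not a perfect square. For the step, suppose $\mathrm{rank}_{\mathbb R}(g)=r\le m-1$ for some $g$ of degree $m\ge 3$ with $m$ distinct real roots. By derivative-lowering there is a nonzero $(\alpha,\beta)$ with $\mathrm{rank}_{\mathbb R}(D_{\alpha,\beta}g)\le r-1\le m-2$. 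But $g$ has distinct roots, so Theorem \ref{thm:main} applies and $D_{\alpha,\beta}g$ has $m-1$ distinct real roots; the inductive hypothesis then forces $\mathrm{rank}_{\mathbb R}(D_{\alpha,\beta}g)\ge m-1$, a contradiction. Hence $\mathrm{rank}_{\mathbb R}(g)\ge m$, and combined with the upper bound this gives $\mathrm{rank}_{\mathbb R}(f)=n$.

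It then remains to prove the converse, ``$\mathrm{rank}_{\mathbb R}(f)=n \Rightarrow f$ has $n$ distinct real roots'', and this is where the arguments of \cite{coot} enter. Note first that a form with distinct complex roots has \emph{complex} rank at most $n-1$ (maximal complex rank would force a root of multiplicity $n-1$), so the real rank attaining the value $n$ is a genuinely real phenomenon, governed by the real points of the apolar ideal rather than by a rank defect of the catalecticant. Following \cite{coot}, one uses the apolarity/Hankel analysis to show that $\mathrm{rank}_{\mathbb R}(f)=n$ forces every $D_{\alpha,\beta}f$ to have maximal rank $n-1$, equivalently to possess $n-1$ distinct real roots. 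Theorem \ref{thm:main} then translates this back into the assertion that $f$ has $n$ distinct real roots, yielding Corollary \ref{cor:rank}.

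I expect the main obstacle to be exactly this converse direction. The derivative-lowering inequality is one-sided, so passing from information about the derivatives back to $f$ cannot be done by the same trick: reconstructing a small decomposition of $f$ from one of $D_{\alpha,\beta}f$ by ``integration'' breaks down precisely when a summand is aligned with the kernel line of $D_{\alpha,\beta}$ (the degeneracy responsible for $x^{n-1}y$ having maximal rank), and ruling this out is the delicate real-apolar content supplied by \cite{coot}. Theorem \ref{thm:main} serves as the bridge converting the resulting root counts into the clean dichotomy of the statement.
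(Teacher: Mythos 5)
Your first implication ($n$ distinct real roots $\Rightarrow$ rank $n$) is correct and is essentially the paper's own argument: differentiate a minimal decomposition in a direction chosen to annihilate one summand, apply Rolle, and induct on the resulting lower bound; your self-contained base case $m\le 2$ even spares the appeal to the cubic case of \cite{coot} that the paper makes.

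The converse is where your proposal has a genuine gap. The entire content of that direction is the claim that $\mathrm{rank}(f)=n$ forces $\mathrm{rank}(\alpha f_x+\beta f_y)=n-1$ for every $(\alpha,\beta)\ne(0,0)$, and you do not prove it: you attribute it to an unspecified ``apolarity/Hankel analysis'' in \cite{coot}, after yourself observing that the natural mechanism can fail. The paper proves exactly this claim, and by nothing more than the integration you discarded: choose coordinates $l,m$ with $\partial_l=\alpha\partial_x+\beta\partial_y$, take a minimal decomposition $f_l=\sum_{i=1}^{s}\mu_i(p_il+q_im)^{n-1}$ with $s\le n-2$, and integrate in $t=l/m$; when every $p_i\ne 0$ one gets $f=\sum_i\frac{\mu_i}{np_i}(p_il+q_im)^n+Cm^n$, so $\mathrm{rank}(f)\le s+1\le n-1$, contradicting $\mathrm{rank}(f)=n$. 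Your worry about a summand proportional to $m^{n-1}$ (whose antiderivative $lm^{n-1}$ is not a power) is legitimate---the paper dismisses that case with ``one sees easily,'' and the unconditional claim really is false, since $f=xy^{n-1}$ has rank $n$ while $f_x=y^{n-1}$ has rank $1$---but flagging an obstacle and pointing at a reference does not close it: a complete proof must either produce a minimal decomposition of $f_l$ avoiding $m^{n-1}$ or show how the hypotheses exclude that case, and you supply neither. Note also that what \cite{coot} and the paper actually use here is this elementary integration, not Hankel machinery, so the ``delicate real-apolar content'' you invoke is not a citable black box.

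There is a second structural gap: your phrase ``maximal rank $n-1$, equivalently $n-1$ distinct real roots'' is the Corollary itself in degree $n-1$, used in the direction rank $\Rightarrow$ roots. Your induction in the first part establishes only the opposite direction (roots $\Rightarrow$ rank $\ge$ degree), so it cannot be quoted for this. The paper resolves this by inducting on the full two-sided statement, with base case $n=3$ supplied by \cite{coot}, Proposition 2.2, and applying the inductive hypothesis to $f_l$ (which, strictly, also requires knowing $f_l$ has no multiple complex roots). Without that scaffolding, your converse direction is a plan whose two key steps are both missing.
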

We leave the following question open for further
investigations. Partial evidence for it has been given from the results in \cite{coot}, where it
has given a positive answer for $n\leq 5$, and where the reader can find references for the
existing literature on rank problems for real tensors.
\begin{question} Are all the ranks $\lfloor n/2\rfloor +1\leq k\leq n$ typical for real binary forms of degree $n$?\end{question}
\section{Main Theorem}
Let $f(x,y)$ be a real homogeneous polynomial of degree $n\geq 3$ without multiple roots in $\mathbb{C}$. Then $\nabla f(x,y)\not=(0,0)$ for any $(x,y)\not=(0,0)$ and one can define the maps $\bar{\phi}:S^1\to S^1$ and $\bar{\psi}:S^1\to S^1$ setting, for any $(x,y)$ with $x^2+y^2=1$, $\bar{\phi}(x,y)=|\nabla f|^{-1}(f_x,f_y)$ and
$\bar{\psi}(x,y)=|\nabla f|^{-1}(xf_x+yf_y, -y f_x+xf_y)$, with $|\nabla f|=(f_x^2+f_y^2)^{1/2}$. Setting
$(x,y)=(\cos\theta,\sin\theta)$, one can also write $\bar{\phi}$ and $\bar{\psi}$ as functions of $\theta$. \vskip1mm \noindent
{\bf Notation.} We denote $\partial_\theta=-y\partial_x+x\partial_y$ the basis tangent
vector to $S^1$ at the point $(x,y)$. Given any differentiable map $\phi:S^1\to M$ to a differentiable manifold $M$, we denote $\phi_\ast:T_\theta S^1\to T_{\phi(\theta)}M$ the associated tangent map. If $M=S^1$, and the map $\phi$ is defined in terms of angular coordinates by the function $\theta_1(\theta)$, we recall that the \emph{degree}, or \emph{winding number}, of $\phi$ is the number
\[\deg \phi=\frac{1}{2\pi}\int_0^{2\pi} \theta_1'(\theta)d\theta.\]
This is always an integer number, and for any $z\in S^1$ one has
$\# \phi^{-1}(z)\geq |\deg \phi\, |$. \vskip1mm \noindent 
The following lemmas are straightforward calculations and their proofs are omitted.
\begin{lemma}\label{lm:winding} Assume that $\theta_1'(\theta)$ never vanishes. Then $\# \phi^{-1}(z)=|\deg \phi\,|$ for any $z\in S^1$.\end{lemma}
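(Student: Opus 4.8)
The plan is to exploit the fact that a continuous, nowhere-vanishing derivative forces strict monotonicity of the angular function $\theta_1$. First I would observe that, since $\theta_1'$ is continuous and never zero on the connected parameter interval, the intermediate value theorem forces $\theta_1'$ to have constant sign; without loss of generality I assume $\theta_1'>0$ throughout, the case $\theta_1'<0$ being obtained by the reflection $\theta\mapsto-\theta$. Consequently $\theta_1:[0,2\pi]\to\mathbb{R}$, viewed as a lift of $\phi$ to the universal cover of $S^1$, is strictly increasing.

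Next I would compute the total variation of this lift by the fundamental theorem of calculus:
\[\theta_1(2\pi)-\theta_1(0)=\int_0^{2\pi}\theta_1'(\theta)\,d\theta=2\pi\deg\phi,\]
and since $\theta_1'>0$ this quantity is strictly positive, so $\deg\phi=|\deg\phi|>0$. Now fix $z\in S^1$ with angular coordinate $\alpha$. A parameter $\theta$ lies in $\phi^{-1}(z)$ exactly when $\theta_1(\theta)\equiv\alpha\pmod{2\pi}$, that is, when $\theta_1(\theta)$ meets one of the levels $\alpha+2\pi k$ with $k\in\mathbb{Z}$. Because $\theta_1$ is strictly increasing it crosses each such level at most once, and as $\theta$ runs over the half-open fundamental domain $[0,2\pi)$ the value $\theta_1(\theta)$ sweeps the half-open interval $[\theta_1(0),\theta_1(0)+2\pi|\deg\phi|)$. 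A half-open interval of length $2\pi|\deg\phi|$ contains precisely $|\deg\phi|$ points congruent to $\alpha$ modulo $2\pi$, and each is attained exactly once; hence $\#\phi^{-1}(z)=|\deg\phi|$, with the count manifestly independent of $z$.

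The only real obstacle is bookkeeping at the seam of the parametrization: one must avoid double-counting a preimage sitting at the identified endpoints $\theta=0$ and $\theta=2\pi$, and treat uniformly the exceptional levels $\alpha$ congruent to $\theta_1(0)$ modulo $2\pi$. Restricting $\theta$ to the half-open domain $[0,2\pi)$ and counting the levels landing in the matching half-open image interval $[\theta_1(0),\theta_1(0)+2\pi|\deg\phi|)$ resolves both ambiguities at once, which is why the half-open conventions are chosen consistently on both sides.
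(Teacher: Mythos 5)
Your proof is correct. The paper itself omits the proof of this lemma (it is declared a ``straightforward calculation''), and your argument --- constant sign of $\theta_1'$ by continuity, the strictly monotone lift $\theta_1:[0,2\pi]\to\mathbb{R}$ mapping the fundamental domain $[0,2\pi)$ bijectively onto the half-open interval $[\theta_1(0),\theta_1(0)+2\pi\deg\phi)$, and the count of levels congruent to $\alpha$ modulo $2\pi$ in that interval --- is exactly the standard argument the authors presumably had in mind, with the endpoint bookkeeping handled cleanly by the half-open conventions.
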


We assume that for any $(\alpha ,\beta)\in\mathbb{P}^1(\mathbb{R})$ the
polynomial $\alpha f_x+\beta f_y$ has $n-1$ distinct roots in $\mathbb{R}$. Under this
assumption, we want to show that the absolute value of the degree of $\bar{\psi}$ is $n$. Since
$f(x,y)=0$ iff $\bar{\psi}(x,y)=(0,\pm 1)$ this implies that $f$ has all its roots in
$\mathbb{R}$. Indeed $\bar{\psi}(-x,-y)=(-1)^n\bar{\psi}(x,y)$, henceforth when $n$ is even
$n/2$ real roots of $f(x,y)=0$ are in $\bar{\psi}^{-1}(0,1)$ and the other $n/2$ roots are in
$\bar{\psi}^{-1}(0,-1)$; otherwise when $n$ is odd one gets $\bar{\psi}^{-1}(0,1)=
\bar{\psi}^{-1}(0,-1)$, hence $\bar{\psi}^{-1}(0,1)$ is the set of the $n$ real roots of
$f(x,y)=0$.
\begin{lemma}\label{lm:partialtheta} Let $F:S^1\to \mathbb{R}^2$ be a differentiable function defined by
$F(x,y)=(F_1(x,y),F_2(x,y))=(a(\theta),b(\theta))$. Then $F_\ast(\partial_\theta)=A\partial_x+B\partial_y$ with
$$\begin{array}{l}A=-y F_{1x}+xF_{1y}=a'(\theta)\\ B=-yF_{2x}+xF_{2y}=b'(\theta).\end{array}$$\end{lemma}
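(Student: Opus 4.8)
The plan is to read both equalities as instances of the chain rule applied to the standard parametrization $\gamma(\theta)=(\cos\theta,\sin\theta)$ of $S^1$. First I would note that, by definition, the tangent vector $\partial_\theta$ is the velocity $\gamma'(\theta)=(-\sin\theta,\cos\theta)=(-y,x)$, so that in the ambient coordinates of $\mathbb{R}^2$ it is exactly the vector $-y\partial_x+x\partial_y$ already recorded in the Notation. This is the object to which the tangent map $F_\ast$ will be applied.

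Next I would compute $F_\ast(\partial_\theta)$ as the image of $(-y,x)$ under the Jacobian matrix of $F=(F_1,F_2)$. Since the rows of that Jacobian are $(F_{1x},F_{1y})$ and $(F_{2x},F_{2y})$, applying it to the column vector $(-y,x)$ produces the vector whose components are $A=-yF_{1x}+xF_{1y}$ and $B=-yF_{2x}+xF_{2y}$. This establishes the middle expressions for $A$ and $B$ in the displayed formulas.

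Finally, to identify these with $a'(\theta)$ and $b'(\theta)$, I would differentiate the defining identities $a(\theta)=F_1(\cos\theta,\sin\theta)$ and $b(\theta)=F_2(\cos\theta,\sin\theta)$ with respect to $\theta$. The chain rule gives $a'(\theta)=F_{1x}\cdot(-\sin\theta)+F_{1y}\cdot\cos\theta=-yF_{1x}+xF_{1y}=A$, and likewise $b'(\theta)=B$. Equivalently, $(a'(\theta),b'(\theta))=(F\circ\gamma)'(\theta)=F_\ast(\gamma'(\theta))=F_\ast(\partial_\theta)$, which is the coordinate-free content of the whole lemma.

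Since the argument is pure chain rule there is no genuine obstacle; the only point deserving a word of care is that the partial derivatives $F_{ix},F_{iy}$ refer to an extension of $F$ to a neighborhood of $S^1$ in $\mathbb{R}^2$, which exists in all intended applications, where $F$ is built from $f_x,f_y$ on $\mathbb{R}^2\setminus\{0\}$. One should observe that the value of $F_\ast(\partial_\theta)$ is independent of the chosen extension precisely because $\partial_\theta$ is tangent to $S^1$; the computation above makes this manifest, as it only ever pairs the gradient rows of $F$ with the fixed tangent direction $(-y,x)$.
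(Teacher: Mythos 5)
Your proof is correct: the paper omits the proof of this lemma as a ``straightforward calculation,'' and your chain-rule argument---identifying $\partial_\theta$ with $(-y,x)$, applying the Jacobian of $F$, and differentiating $a(\theta)=F_1(\cos\theta,\sin\theta)$---is exactly the calculation intended. Your closing remark that the directional derivative along $(-y,x)$ is independent of the extension of $F$ off $S^1$ is a sensible extra precaution, not a deviation from the paper's approach.
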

\noindent 
{\bf Notation.} Given a map $f:S^1\to \mathbb{R}^2$,
which one can write $f(\theta)=(a(\theta),b(\theta))$, we denote
with $(f,f_\theta)$ the matrix 
$$ \left(\begin{array}{ll}a(\theta)&
b(\theta)\\a'(\theta)& b'(\theta)\end{array}\right).$$
Notice that the sign of the determinant of this matrix expresses\\
if $f_\ast$ is orientation-preserving at the point $f(\theta)$.
\begin{lemma}\label{lm:normaliz}
Let $g:S^1\to \mathbb{R}^2$ and $\rho:S^1\to \mathbb{R}_+$ be differentiable functions. Then
$\det(g,g_\theta)=\rho^{-2}\det(\rho g,(\rho g)_\theta)$.\end{lemma}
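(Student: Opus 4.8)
The plan is to unwind both determinants directly, since the statement is purely a consequence of the product rule together with the multilinearity and alternating property of the $2\times 2$ determinant. Writing $g(\theta)=(a(\theta),b(\theta))$, so that by definition $\det(g,g_\theta)=ab'-a'b$, the only real content is to track what happens to the bottom row of the matrix when one multiplies the function by $\rho$.

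First I would apply the product rule componentwise to obtain the identity
$$(\rho g)_\theta=\rho' g+\rho g_\theta.$$
This is the crucial step: it exhibits the derivative of the rescaled function as the sum of a part parallel to $g$ and a part equal to $\rho$ times the original derivative. Substituting into the matrix $(\rho g,(\rho g)_\theta)$, whose top row is $\rho g$ and whose bottom row is $\rho' g+\rho g_\theta$, and using linearity of the determinant in the bottom row, I would split
$$\det(\rho g,(\rho g)_\theta)=\rho'\det(\rho g,g)+\rho\det(\rho g,g_\theta).$$

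The first term vanishes because its two rows, $\rho g$ and $g$, are proportional, so that matrix is singular. The second term factors as $\rho\cdot\rho\,\det(g,g_\theta)=\rho^2\det(g,g_\theta)$ after pulling the scalar $\rho$ out of the top row. Dividing by $\rho^2$, which is legitimate since $\rho$ takes values in $\mathbb{R}_+$ and hence never vanishes, yields the claimed identity $\det(g,g_\theta)=\rho^{-2}\det(\rho g,(\rho g)_\theta)$.

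There is no genuine obstacle here: the computation is short, and the positivity hypothesis on $\rho$ enters only to guarantee that $\rho^{-2}$ is defined. Should one prefer a fully explicit verification over the conceptual one, one can instead expand the determinant with top row $(\rho a,\rho b)$ and bottom row $((\rho a)',(\rho b)')$ and watch the two $\rho\rho' ab$ cross-terms cancel, leaving exactly $\rho^2(ab'-a'b)$; this is the same cancellation recorded above, now written out coordinatewise.
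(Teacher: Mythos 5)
Your proof is correct, and it is precisely the "straightforward calculation" the paper had in mind: the paper omits the proof of this lemma entirely, declaring it a routine computation. Your decomposition via the product rule $(\rho g)_\theta=\rho' g+\rho g_\theta$, bilinearity, and the vanishing of the determinant with proportional rows is the standard argument and fills the gap exactly as intended.
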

Notice that if $\bar{g}:S^1\to S^1$ is the map $\bar{\phi}(x,y)=|\nabla f|^{-1}(f_x,f_y)$ then one may calculate
the sign of  $\det(\bar{\phi},\bar{\phi}_\theta)$ by reducing to the simpler map $\phi=(f_x,f_y):S^1\to \mathbb{R}^2$.
\vskip1mm\noindent {\bf Notation.} We denote by $H(f)=\det\left(\begin{array}{ll} f_{xx}& f_{xy}\\ f_{xy}&
f_{yy}\end{array}\right)$, the {\em hessian} of $f$.
\begin{proposition}\label{prop:hessian}Let $\phi:S^1\to \mathbb{R}^2$ be the map defined above. Then
$$\det(\phi,\phi_\theta)=(n-1)^{-1}H(f).$$ \end{proposition}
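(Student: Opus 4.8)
The plan is to compute the two rows of the matrix $(\phi,\phi_\theta)$ explicitly and then simplify the $2\times 2$ determinant using the homogeneity of $f$. The top row is $\phi=(f_x,f_y)$ by definition. For the bottom row I would apply Lemma~\ref{lm:partialtheta} with $F=\phi$, so that $F_1=f_x$ and $F_2=f_y$; recalling $f_{yx}=f_{xy}$, this yields
$$\phi_\theta=(a'(\theta),b'(\theta))=(-y f_{xx}+x f_{xy},\,-y f_{xy}+x f_{yy}).$$
Expanding the determinant of $(\phi,\phi_\theta)$ then gives
$$\det(\phi,\phi_\theta)=f_x\,(x f_{yy}-y f_{xy})-f_y\,(x f_{xy}-y f_{xx}).$$

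The crucial step is to exploit that $f_x$ and $f_y$ are homogeneous of degree $n-1$, so that Euler's identity applies to each of them:
$$(n-1)f_x=x f_{xx}+y f_{xy},\qquad (n-1)f_y=x f_{xy}+y f_{yy}.$$
Multiplying the determinant by $(n-1)$ and substituting these two relations in place of the factors $(n-1)f_x$ and $(n-1)f_y$ turns the expression into a homogeneous quadratic in $x,y$ whose coefficients are products of second derivatives. I expect the mixed terms involving $f_{xx}f_{xy}$ and $f_{xy}f_{yy}$ to cancel in pairs, leaving
$$(n-1)\det(\phi,\phi_\theta)=(x^2+y^2)(f_{xx}f_{yy}-f_{xy}^2)=(x^2+y^2)\,H(f).$$

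Finally, since $\phi$ is defined on $S^1$ we have $x^2+y^2=1$, and dividing by $n-1$ gives the claimed identity $\det(\phi,\phi_\theta)=(n-1)^{-1}H(f)$. The only genuine obstacle is the bookkeeping: one must verify that the mixed second-derivative terms cancel and that the surviving terms assemble into $(x^2+y^2)H(f)$. This is a routine but slightly delicate expansion, and the whole computation hinges on applying Euler's relation to the \emph{first} partials $f_x,f_y$ rather than to $f$ itself.
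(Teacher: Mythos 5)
Your proposal is correct and is essentially the paper's own argument: both compute the bottom row $\phi_\theta=(-yf_{xx}+xf_{xy},\,-yf_{xy}+xf_{yy})$ via Lemma~\ref{lm:partialtheta} and then apply Euler's identity to the degree-$(n-1)$ forms $f_x,f_y$, and your predicted cancellation does work out exactly, leaving $(x^2+y^2)H(f)$ with $x^2+y^2=1$ on $S^1$. The only difference is cosmetic: where you expand the determinant and check the mixed terms cancel by hand, the paper replaces the top row $(f_x,f_y)$ by $(n-1)^{-1}(xf_{xx}+yf_{xy},\,xf_{xy}+yf_{yy})$ and recognizes the resulting matrix as the product of the rotation matrix with rows $(x,y)$ and $(-y,x)$ with the Hessian matrix, so the determinant factors as $(n-1)^{-1}(x^2+y^2)H(f)$ immediately, with no term-by-term bookkeeping.
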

\begin{proof} We have $\phi_\ast(\partial_\theta)=A \partial_x+B\partial_y$ with $A$ and $B$ determined as in Lemma \ref{lm:partialtheta}, hence
\begin{eqnarray*} \det(\phi,\phi_\theta)&=&\det\left(\begin{array}{ll}f_x& f_y\\-yf_{xx}+xf_{xy}&-yf_{yx}+xf_{yy}\end{array}\right)\\
&=&\frac{1}{n-1}\det
\left(\begin{array}{cc}xf_{xx}+yf_{xy}& xf_{yx}+yf_{yy}\\-yf_{xx}+xf_{xy}&-yf_{yx}+xf_{yy}\end{array}\right)\\
&=&\frac{1}{n-1}\det\left(\begin{array}{cc}x&y\\
-y&x\end{array}\right)
\det\left(\begin{array}{cc} f_{xx}& f_{xy}\\ f_{xy}& f_{yy}\end{array}\right)\\
&=&\frac{1}{n-1} H(f)\end{eqnarray*}
\end{proof}
\begin{proposition}\label{prop:rotation} Let $\phi:S^1\to\mathbb{R}^2\cong\mathbb{C}$ defined by $\phi(\theta)=a(\theta)+ib(\theta)$ and $\psi:S^1\to\mathbb{C}$ defined by $\psi(\theta)=e^{-i\theta}\phi(\theta)$.  Then $\det(\psi,\psi_\theta)=\det(\phi,\phi_\theta)-a^2-b^2$.
\end{proposition}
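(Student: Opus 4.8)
The plan is to recast the bilinear quantity $\det(f,f_\theta)=ab'-a'b$ in complex form, where premultiplication by $e^{-i\theta}$ becomes transparent. Writing a map $f:S^1\to\mathbb{C}$ as $f(\theta)=a(\theta)+ib(\theta)$, I would first record the identity $\det(f,f_\theta)=\mathrm{Im}(\bar f\,\dot f)$, which is immediate since $\bar f\,\dot f=(a-ib)(a'+ib')=(aa'+bb')+i(ab'-a'b)$, whose imaginary part is exactly $ab'-a'b=\det(f,f_\theta)$. This single reformulation does essentially all the work, and it applies verbatim to both $\phi$ and $\psi$.

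Next I would differentiate $\psi=e^{-i\theta}\phi$ by the product rule to obtain $\dot\psi=e^{-i\theta}(\dot\phi-i\phi)$, while $\bar\psi=e^{i\theta}\bar\phi$. Multiplying these, the rotation factors $e^{\pm i\theta}$ cancel and one is left with
$$\bar\psi\,\dot\psi=\bar\phi\,(\dot\phi-i\phi)=\bar\phi\,\dot\phi-i\,|\phi|^2.$$
Taking imaginary parts of both sides and using the identity from the first step, $\mathrm{Im}(\bar\psi\,\dot\psi)=\det(\psi,\psi_\theta)$ and $\mathrm{Im}(\bar\phi\,\dot\phi)=\det(\phi,\phi_\theta)$; since $|\phi|^2=a^2+b^2$ is real, $\mathrm{Im}(-i\,|\phi|^2)=-(a^2+b^2)$. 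Combining these yields precisely $\det(\psi,\psi_\theta)=\det(\phi,\phi_\theta)-a^2-b^2$.

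I do not expect a genuine obstacle here: the content is the elementary fact that the rotation $e^{-i\theta}$ leaves the modulus unchanged but decreases the angular velocity by one. Concretely, the same identity shows $\det(f,f_\theta)=|f|^2(\arg f)'$, so from $\arg\psi=\arg\phi-\theta$ and $|\psi|=|\phi|$ one gets $\det(\psi,\psi_\theta)=|\phi|^2\big((\arg\phi)'-1\big)=\det(\phi,\phi_\theta)-|\phi|^2$, recovering the claim. The only point requiring care is the bookkeeping of signs and the orientation convention built into the definition of $(f,f_\theta)$; for this reason I would favor the complex computation over the purely real one, in which one expands $PQ'-QP'$ for $\psi=(a\cos\theta+b\sin\theta,\;b\cos\theta-a\sin\theta)$ directly — this gives the same result but is considerably more error-prone.
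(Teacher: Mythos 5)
Your proof is correct and takes essentially the same route as the paper: both differentiate $\psi=e^{-i\theta}\phi$ by the product rule to get $\dot\psi=e^{-i\theta}(\dot\phi-i\phi)$, cancel the rotation factor against the determinant, and expand to obtain the $-a^2-b^2$ correction. The only cosmetic difference is that you justify the cancellation explicitly via the identity $\det(f,f_\theta)=\mathrm{Im}(\bar f\,\dot f)$, whereas the paper tacitly uses that multiplication by $e^{-i\theta}$ is a rotation of determinant $1$ when it replaces the rows of $(\psi,\psi_\theta)$ by $(a,b)$ and $(a'+b,\,b'-a)$.
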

\begin{proof} We calculate $\psi'(\theta)=(a'+b+i(b'-a))e^{-i\theta}$. It follows that
\[
\det(\psi,\psi_\theta)=\det\left(\begin{array}{ll}a&b\\a'+b&b'-a\end{array}\right)=
\det\left(\begin{array}{ll}a&b\\a'&b'\end{array}\right)-a^2-b^2.\]\qed\end{proof}
Notice that by Lemma \ref{lm:normaliz}, if $\phi:S^1\to \mathbb{R}^2$ is given in polar coordinates by
$\phi(\theta)=\rho(\theta)\bar{\phi}(\theta)$, with $\bar{\phi}:S^1\to S^1$, then
$\det(\phi,\phi_\theta)=\rho^2\det(\bar{\phi},\bar{\phi}_\theta)$. Moreover, expressing $\bar{\phi}$  in terms of angular coordinates by means of a function $\theta_1(\theta)$,
one sees easily that $\det(\bar{\phi},\bar{\phi}_\theta)=\theta_1'(\theta)$. We are
interested in $\bar{\phi}=|\nabla f|^{-1}(f_x,f_y)$ and
$\bar{\psi}=|\nabla f|^{-1}(xf_x+yf_y,-yf_x+xf_y)$. In this case
we get the following result.
\begin{corollary}\label{cor:theta} In the notations above, the following statements hold.\begin{enumerate}
\item
$\det(\bar{\phi},\bar{\phi}_\theta)=\theta_1'(\theta)=(n-1)^{-1}|\nabla
f|^{-2}H(f)$. \item $\deg \bar{\psi}=\deg
\bar{\phi}-1$.\end{enumerate}\end{corollary}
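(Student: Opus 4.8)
The plan is to handle the two statements separately: part (1) by assembling ingredients already in hand, and part (2) by reducing to Proposition \ref{prop:rotation} after one clean observation.

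For part (1), I would begin from Proposition \ref{prop:hessian}, which gives $\det(\phi,\phi_\theta)=(n-1)^{-1}H(f)$ for the unnormalized map $\phi=(f_x,f_y)$. Writing $\phi$ in polar form as $\phi=\rho\,\bar{\phi}$ with $\rho=|\nabla f|$ and $\bar{\phi}:S^1\to S^1$, the scaling identity recorded after Proposition \ref{prop:rotation} (a consequence of Lemma \ref{lm:normaliz}) gives $\det(\phi,\phi_\theta)=\rho^2\det(\bar{\phi},\bar{\phi}_\theta)=|\nabla f|^2\det(\bar{\phi},\bar{\phi}_\theta)$. Dividing yields $\det(\bar{\phi},\bar{\phi}_\theta)=(n-1)^{-1}|\nabla f|^{-2}H(f)$. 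Since $\bar{\phi}$ takes values in $S^1$ and is described in angular coordinates by $\theta_1(\theta)$, the same remark gives $\det(\bar{\phi},\bar{\phi}_\theta)=\theta_1'(\theta)$, completing the chain of equalities.

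For part (2), the key step is to recognize that the defining pair of maps fits the template of Proposition \ref{prop:rotation}. Identifying $\mathbb{R}^2\cong\mathbb{C}$ and writing $(x,y)=(\cos\theta,\sin\theta)$ so that $x+iy=e^{i\theta}$, a short computation grouping the $f_x$ and $f_y$ terms gives
\[(xf_x+yf_y)+i(-yf_x+xf_y)=(x-iy)(f_x+if_y)=e^{-i\theta}(f_x+if_y),\]
so that $\psi(\theta)=e^{-i\theta}\phi(\theta)$ for the unnormalized maps. Dividing both sides by the common modulus $|\nabla f|$ gives $\bar{\psi}=e^{-i\theta}\bar{\phi}$ on $S^1$.

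It then remains only to read off the degrees. Applying Proposition \ref{prop:rotation} to the normalized maps $\bar{\phi}$ and $\bar{\psi}$ (both valued in $S^1$, so that the subtracted term $a^2+b^2$ equals $1$) yields $\det(\bar{\psi},\bar{\psi}_\theta)=\det(\bar{\phi},\bar{\phi}_\theta)-1$, i.e.~$\theta_2'(\theta)=\theta_1'(\theta)-1$ for the angular coordinates $\theta_1,\theta_2$ of $\bar{\phi},\bar{\psi}$. Integrating over $[0,2\pi]$ and dividing by $2\pi$ then gives $\deg\bar{\psi}=\deg\bar{\phi}-1$. I expect the only real obstacle to be one of recognition—seeing that the rotation factor $e^{-i\theta}$ built into the definition of $\psi$ is precisely what Proposition \ref{prop:rotation} was designed to absorb; once that is spotted, both the determinant bookkeeping of part (1) and the winding-number subtraction of part (2) are routine.
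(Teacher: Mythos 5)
Your proposal is correct and follows essentially the same route as the paper: part (1) combines Proposition \ref{prop:hessian} with Lemma \ref{lm:normaliz}, and part (2) applies Proposition \ref{prop:rotation} (with the $S^1$-valued maps making $a^2+b^2=1$) and integrates to compare winding numbers. Your explicit verification that $(xf_x+yf_y)+i(-yf_x+xf_y)=e^{-i\theta}(f_x+if_y)$, i.e.\ that $\bar{\psi}=e^{-i\theta}\bar{\phi}$, is a detail the paper leaves implicit, so your write-up is if anything slightly more complete.
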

\begin{proof} The first statement follows form Proposition \ref{prop:hessian} and Lemma \ref{lm:normaliz}.
The second one follows from Proposition \ref{prop:rotation} and Lemma \ref{lm:normaliz} since\[
\deg\bar{\psi}=\frac{1}{2\pi}\int\det(\bar{\psi},\bar{\psi}_\theta)=\frac{1}{2\pi}\int\det(\bar{\phi},\bar{\phi}_\theta)-1
=\deg\bar{\phi}-1.\]\end{proof} Now we are ready to complete the proof of Theorem
\ref{thm:main}.
\begin{proof}[Proof of Theorem \ref{thm:main}]
Since for any
$(\alpha,\beta)\in\mathbb{R}^2\setminus\{0\}$ the polynomial $\alpha f_x+\beta f_y$ has $n-1$
distinct real roots, then the map $(f_x,f_y):\mathbb{P}^1_\mathbb{R}\to \mathbb{P}^1_\mathbb{R}$
has no ramification at any real point of $\mathbb{P}^1_\mathbb{R}$. Equivalently, the jacobian
of $\bar{\phi}$ which is equal to the hessian $H(f)$ is always non-zero at the real points of
$\mathbb{P}^1_\mathbb{R}$. We call the map $\bar{\phi}:S^1\to S^1$ defined by
$\bar{\phi}=|\nabla f|^{-1}(f_x,f_y)$ and we also express it as
$\theta_1=\theta_1(\theta)$ in angular coordinates. By the observation above 
 and Corollary \ref{cor:theta}
it follows that the derivative $\theta_1'(\theta)$ is non vanishing at any $\theta\in S^1$. Hence $\theta_1'(\theta)$ is either always positive or always negative.

\noindent {\em Claim:} $\theta_1'(\theta)<0$ for any $\theta$.

The sign of $\theta_1'(\theta)$ is the same than the sign of $H(f)$. Since we already know that it is constant it will be sufficient to evaluate it at a single point $(x,y)\in S^1$. We choose to examine the point $(1,0)$. One observes that for any binary form $g(x,y)={m\choose 0}a_0x^m+{m\choose 1}a_1 x^{m-1}y+\cdots+{m\choose m}a_my^m$ of degree $m\geq 3$, the Hessian $H(g)$ calculated at $(1,0)$ is equal to
\[m(m-1)\det\left(\begin{array}{ll}a_0&a_1\\a_1& a_2\end{array}\right).\] Similarly the Hessian of its derivative $g_x$ at $(1,0)$ is given by \[m(m-1)(m-2)\det\left(
\begin{array}{ll}a_0&a_1\\a_1&a_2\end{array}\right).\]
Therefore we find that $$H(g)(1,0)=(m-2)^{-1}H(g_x)(1,0).$$ Applying this result to $g=f$, we are reduced to compute the sign of $H(f_x)$. We know that $f_x$ has $n-1$ distinct real roots, so all of its
derivatives $\partial_x^i(f_x)$ have all their roots real and distinct, up to $i=n-3$.
The last of these derivatives is $h=\partial_x^{n-2}f$, and its Hessian is a constant equal to
$-\Delta(h)$,  hence $H(h)<0$. Applying recursively the reduction step, we find that
$H(f)(1,0)=(n-2)^{-1}H(f_x)(1,0)=((n-2)!)^{-1}H(h)<0,$ proving the claim.

By Corollary \ref{cor:theta}({\em 1.}), Lemma \ref{lm:winding} and applying the claim above, we
get that $\deg\bar{\phi}<0$ and $\#\phi^{-1}(z)=|\deg \phi\,|$ for any $z\in S^1$, hence
$\deg\bar{\phi}=-n+1$. Moreover, by Corollary \ref{cor:theta}({\em 2.}), we have
$\deg\bar{\psi}=\deg\bar{\phi} -1=-n$, hence $\#\mbox{real roots}(f)\geq |\deg\bar{\psi}\,|=n$.
This completes the proof of the Theorem.\end{proof}
We conclude giving a self-contained proof of the result on the rank of a real binary form
mentioned in the introduction. The arguments given are all already in \cite{coot}.
\begin{proof}[Proof of Corollary \ref{cor:rank}.]
 The statement holds for $n=3$, as shown in
\cite{coot}, Proposition 2.2. Assuming $n>3$, suppose the statement holds in degree $n-1$.
Assume rank$(f)=r$, so one can write $f=\lambda_1l_1^n+\cdots+\lambda_rl_r^n$, with $r$ minimal.
Then one can consider $l=l_1$ and $m=l_r$ and $g(t)=m^{-n}f$, with $t=l/m$. One sees that
$m^{-n+1}f_l=g'(t)$ can be expressed as a sum of at most $r-1$ $n$-th powers of linear forms in
$t$. If $f$ has $n$ distinct real roots then, by induction hypothesis $f_l$ has $n-1$ distinct
real roots and we find $r-1\geq n-1$, i.e. $r\geq n$. Since the inequality $r\leq n$ always
holds, as shown in \cite{coot} Proposition 2.1, we have $r=n$. Conversely, if the rank of $f$ is
$n$ then take $r=n$ and consider any derivative $\alpha f_x+\beta f_y=f_l$, after defining a
suitable coordinate system $l,m$, as explained in the introduction. We can consider the
polynomial $g'(t)=m^{-n+1}f_l$. If it has rank $<n-1$ then by indefinite integration over $t$ one
sees easily that $f$ has rank$<n$, contrary to the assumption. So rank$(f_l)=n-1$ and, by
induction hypothesis, it also holds that $f_l$ has $n-1$ distinct real roots. By the
arbitrariness of $l$ and by Theorem \ref{thm:main}, we conclude that $f$ has $n$ distinct
roots.
\end{proof}
\noindent
\textbf{Acknowledgements} The second author thanks G.~Ottaviani for posing the question giving rise to this paper and for stimulating discussions.

\newpage
\noindent
Antonio Causa\\
\texttt{causa@dmi.unict.it}

\noindent
Riccardo Re\\
\texttt{riccardo@dmi.unict.it}

\noindent
Universit\`a di Catania\\
Dipartimento di Matematica e Informatica,\\
Viale Andrea Doria 6\\
I--95125 Catania, Italy.

\end{document}